\newtheorem{theorem}{Theorem}
\theoremstyle{definition}
\newtheorem*{remark}{Remark}
\title[A Generalization of Gauss-Kuzmin-L\'evy Theorem]
      {A Generalization of Gauss-Kuzmin-L\'evy Theorem}
\author[Peng Sun]{}
\subjclass[2000]{Primary: 11J70, 11K50, 37C30.}
 \keywords{Gauss transformation, transfer operator, Gauss' problem, Hurwitz
 zeta function}
 \email{sunpeng@cufe.edu.cn}
\thanks{The author is supported by NSFC No. 11571387.}
\begin{document}

\maketitle\ 

% Enter the first author's name and address:
\centerline{\scshape Peng Sun}
\medskip
{\footnotesize
% please put the address of the first author
 \centerline{China Economics and Management Academy}
   \centerline{Central University of Finance and Economics}
   \centerline{Beijing 100081, China}
} % Do not forget to end the {\footnotesize by the sign }

\bigskip

%+Abstract
\begin{abstract}
    We prove a generalized Gauss-Kuzmin-L\'evy theorem
for the $p$-numerated generalized Gauss transformation $$T_p(x)=\{\frac
{p}{x}\}.$$
In addition, we give an estimate for the constant that appears in the theorem.

\end{abstract}
%-Abstract

%+Contents
%\tableofcontents
%-Contents

%\section{The Invariant Measure}

%\vspace{20}
\bigskip

\bigskip

%\vskip{20}

Let $p$ be a positive integer. We consider the following generalized Gauss transformation
on $[0,1]$
$$T(x)=T_p(x)=\begin{cases}0,& x=0,\\
\{\cfrac p x\},& x\ne 0,
\end{cases}$$
where $\{x\}$ is the fractional part of $x$.
Such transformations were first introduced in \cite{DK}
(the associated continued fractions had appeared in \cite{BG}) 
and also studied
in \cite{La1}\cite{La2}\cite{Sun1}.
For every $p$, $T_p$ has a unique absolutely continuous
ergodic invariant measure
$$d\mu_p(x)=\frac1{\ln (p+1)-\ln p}\cdot\frac 1{p+x}dm(x),$$
where $m$ is the Lebesuge measure on $[0,1]$. 
Equivalently, $$\eta_p(x)=\frac1{\ln (p+1)-\ln p}\cdot\cfrac{1}{p+x}$$
is the unique continuous eigenfunction of the transfer operator 
$$(\mathscr{G}_pf)(x)=\sum_{T_p(y)=x}\frac{f(y)}{|T_p'(y)|}=\sum_{k=p}^\infty\frac{p}{(k+x)^2}f(\frac{p}{k+x})$$
corresponding to the eigenvalue $1$. We remark that $\mathscr{G}_1$ is the so-called Gauss-Kuzmin-Wirsing
operator introduced in \cite{Khin}. Detailed discussion on
this operator can be found in \cite{IK}.

Denote
$$\varPhi_p(x)=\mu_p([0,x])=\frac{\ln(p+x)-\ln p}{\ln(p+1)-\ln p}.$$
%$T=T_1$ is the regular Gauss transfermation. 
Let
$$\varphi_n(x)=\varphi_{p,n}(x)=m(T_p^{-n}([0,x])).$$
%For $p=1$, 
Gauss has shown that 
$$\lim_{n\to\infty}\varphi_{1,n}(x)=\varPhi_1(x)=\frac{\ln(1+x)}{\ln2}.$$
In 1812, he proposed the problem to estimate
$$\Delta_{n}(x)=\varphi_{1,n}(x)-\varPhi_1(x).$$
The first solution was given by Kuzmin \cite{Kuzmin}, who showed in 1928 that
$$\Delta_{n}(x)=O(q^{\sqrt n})$$
as $n\to\infty$ for some $q\in(0,1)$. In 1929 L\'evy \cite{Levy} established
$$\Delta_{n}(x)=O(q^n)$$
for $q=3.5-2\sqrt2<0.7$.

In this paper we would like to follow an approach in \cite{RS} to generalize
L\'evy's result for all $T_p$:

\begin{theorem}\label{thmain}
For every positive integer $p$ and every $x\in[0,1]$,
\begin{equation}\label{Phiest}
\varphi_{p,n}(x)=\varPhi_p(x)+O(Q_p^n),
\end{equation}
where 
$$Q_p=2p^2\zeta(3,p)-p\zeta(2,p)<\frac1{2p}+\frac{3}{8p^2}<1.$$
and
$$\zeta(2,p)=\sum_{k=p}^\infty\frac{1}{k^2},\;
%\text{ and }
\zeta(3,p)=\sum_{k=p}^\infty\frac{1}{k^3}$$
are the Hurwitz zeta functions.
\end{theorem}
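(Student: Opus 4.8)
The plan is to pass to densities. Writing $\psi_n:=\varphi_{p,n}'$, one checks that pushing Lebesgue measure forward by $T_p$ multiplies densities by the transfer operator, so that $\psi_n=\mathscr{G}_p^n 1$ and $\psi_{n+1}=\mathscr{G}_p\psi_n$, while the invariant density satisfies $\eta_p=\mathscr{G}_p\eta_p$. Since $\varphi_{p,n}(x)-\varPhi_p(x)=\int_0^x\bigl(\psi_n(t)-\eta_p(t)\bigr)\,dm(t)$, it suffices to prove the uniform bound $\|\psi_n-\eta_p\|_\infty=O(Q_p^n)$ and then integrate over $[0,x]\subseteq[0,1]$.

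To exploit the shape of $\eta_p$, I would normalize by setting $g_n(x):=(p+x)\psi_n(x)$. A direct computation with $\eta_p(p/(k+x))$ collapses the Jacobian and turns the Kuzmin recursion into a genuine averaging (Markov) recursion $g_{n+1}=Pg_n$, where
\begin{equation*}
(Pg)(x)=\sum_{k=p}^\infty W_k(x)\,g\!\left(\frac{p}{k+x}\right),\qquad W_k(x)=\frac{p+x}{(k+x)(k+x+1)},\qquad \sum_{k=p}^\infty W_k(x)=1,
\end{equation*}
the weights summing to $1$ by telescoping; the constant functions are exactly the fixed points of $P$, and $g_0(x)=p+x$. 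The goal then becomes to show that $P$ contracts the oscillation of $g_n$ at rate $Q_p$, which I would obtain from the derivative bound $\|g_{n+1}'\|_\infty\le Q_p\|g_n'\|_\infty$.

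Differentiating $g_{n+1}=Pg_n$ produces two sums: one containing $g_n'$, which is harmless, and a second one, $\sum_k W_k'(x)\,g_n(p/(k+x))$, containing $g_n$ itself. This term is the main obstacle, and the key device is to remove it by Abel summation. Using $\sum_{j\ge k}W_j(x)=(p+x)/(k+x)$ one gets the clean tail $P_k(x):=\sum_{j\ge k}W_j'(x)=\frac{k-p}{(k+x)^2}$, with $P_p=0$ and $P_k\to 0$, so the boundary terms vanish and the bad sum becomes $\sum_{k\ge p+1}P_k(x)\bigl[g_n(v_k(x))-g_n(v_{k-1}(x))\bigr]$ with $v_k(x)=p/(k+x)$. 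Since $v_{k-1}(x)-v_k(x)=p/\bigl((k-1+x)(k+x)\bigr)$, both sums are now bounded by $\|g_n'\|_\infty$ times explicit positive series, giving $|g_{n+1}'(x)|\le \Xi(x)\,\|g_n'\|_\infty$ with
\begin{equation*}
\Xi(x)=p\sum_{k=p+1}^\infty\frac{k-p}{(k+x)^3(k-1+x)}+p(p+x)\sum_{k=p}^\infty\frac{1}{(k+x)^3(k+x+1)}.
\end{equation*}

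The remaining work is to evaluate $\Xi$. Each summand is decreasing in $x$ on $[0,1]$, so $\max_{x}\Xi(x)=\Xi(0)$; and a partial-fraction/telescoping computation collapses $\Xi(0)$ to exactly $2p^2\zeta(3,p)-p\zeta(2,p)=Q_p$. This yields $\|g_n'\|_\infty\le Q_p^n\|g_0'\|_\infty=Q_p^n$, hence $g_n$ lies within $Q_p^n$ of a constant; the normalization $\int_0^1\psi_n\,dm=1$ forces that constant to be $1/(\ln(p+1)-\ln p)$ up to $O(Q_p^n)$, which is precisely $(p+x)\eta_p(x)$, giving $\|\psi_n-\eta_p\|_\infty=O(Q_p^n)$ and then \eqref{Phiest}. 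The final inequality $Q_p<\frac{1}{2p}+\frac{3}{8p^2}<1$ is a routine Euler–Maclaurin estimate of the two Hurwitz tails. The genuinely delicate points are the Abel-summation step that eliminates the $g_n$-term and the verification that the resulting multiplier $\Xi$ is maximized at $x=0$ with value precisely $Q_p$.
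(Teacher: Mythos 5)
Your overall architecture runs parallel to the paper's proof: the same reduction to the transfer operator, the same substitution $g_n(x)=(p+x)\psi_n(x)$ giving a Markov-type recursion with weights $h_k(x)=\frac{p+x}{(k+x)(k+1+x)}$ summing to $1$, the same contraction strategy $\|g_{n+1}'\|\le \|\Xi\|\,\|g_n'\|$, and the same identification of the limiting constant via $\int_0^1\psi_n\,dm=1$. Your one genuinely different device is the Abel summation with tails $P_k(x)=\frac{k-p}{(k+x)^2}$, where the paper instead subtracts $g_n\bigl(\frac{p}{p+x}\bigr)\sum_k h_k'(x)=0$ and applies the mean value theorem; your version is clean ($P_p=0$, boundary terms vanish), and your $\Xi(0)$ does collapse to exactly $Q_p$ (the partial fractions check out).

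However, there is a genuine gap at the crux. Your claim that \emph{each summand of $\Xi$ is decreasing in $x$} is false for the second sum: the term $\frac{p(p+x)}{(k+x)^3(k+1+x)}$ has logarithmic derivative $\frac{1}{p+x}-\frac{3}{k+x}-\frac{1}{k+1+x}$, which is \emph{positive} once $k$ is large compared with $p$ (roughly $k>4p$), because the numerator factor $p+x$ then dominates; for instance with $p=1$, $k=100$ the term nearly doubles between $x=0$ and $x=1$. So the per-summand monotonicity argument collapses, and $\|\Xi\|_\infty=\Xi(0)$ is left unproven. This is exactly the step on which the paper spends most of its effort: it groups the two contributions for each $k$ into a single term $D_k(x)$ and proves $D_k'(x)<0$ by a two-case analysis ($p\le k\le 2p$ and $k\ge 2p+1$). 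In your decomposition the conclusion happens to be true --- summing the partial fractions gives the closed form $\Xi(x)=2p(p+x)\sum_{k\ge p}(k+x)^{-3}-p\sum_{k\ge p}(k+x)^{-2}$, which one can show is decreasing, or one can regroup by pairing the $(k+1)$-st term of your first sum with the $k$-th term of your second --- but some such argument must actually be supplied; it is the heart of the proof, not a remark. A second, smaller omission: the inequality $Q_p<\frac{1}{2p}+\frac{3}{8p^2}<1$ is not a throwaway ``routine'' estimate --- without $Q_p<1$ the contraction yields nothing --- and the paper devotes a separate theorem to it, using carefully chosen telescoping comparisons such as $k^2\le(k+a)(k+1+a)$ with $a=\frac12(\sqrt{4p^2+1}-(2p+1))$.
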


\begin{remark} 
We would like to thank an anonymous referee from whom we learned that there
is a similar result in \cite[Theorem 1.1 (ii)]{La1}. Compared to it, we have
a different approach and the
main novelty of Theorem \ref{thmain} is
the explicit expression of $Q_p$, which is an upper bound of the exponential rate
of decay for
$$\Delta_{p,n}=\varphi_{p,n}-\varPhi_p.$$
As a generalization of L\'evy's result \cite{Levy} on $\Delta_n=\Delta_{1,n}$, it is natural to expect
that $\Delta_{p,n}$ also decays exponentially. Our motivation is to see
how the rate depends on $p$.
The estimate we have for $Q_p$
shows that $Q_p\to 0$ as $p\to\infty$. Furthermore, it provides the first order term $\frac{1}{2p}$.
So generally it is faster that $\varphi_{p,n}$ converges to $\Phi_p$ as $p$ grows. This is the most interesting fact we
observe in
this work.
\end{remark}

For fixed $p$, we have $$\varphi_0(x)=\varphi_{p,0}(x)=x$$
%, $\varphi_n(0)=0$ for all $n$
and
\begin{align*}
\varphi_{n+1}(x)
%=&\int_0^x T^{-(n+1)}(t)dt=\int_0^x T^{-n}(T^{-1}(t))dt\\
%=&\sum_{k=1}^\infty\int_{\frac{1}{k+x}}^{\frac{1}{k}} 
=&m(T^{-(n+1)}((0,x)))=m(T^{-n}(T^{-1}((0,x))))\\
=&m(T^{-n}(\bigcup_{k=p}^\infty (\frac{p}{k+x},\frac{p}{k})))
=\sum_{k=p}^\infty(\varphi_n(\frac{p}{k})-\varphi_n(\frac{p}{k+x})).
\end{align*}
This recursive formula implies that $\varphi_n$ is differentiable (actually analytic)
and hence
$$\varphi_{n+1}'(x)=\sum_{k=p}^{\infty}\frac{p}{(k+x)^2}\varphi_n'(\frac{p}{k+x})=\mathscr{G}_p\varphi_n'.$$
So it is enough to study the operator $\mathscr{G}_p$. Note that (\ref{Phiest})
holds if
$$\varphi_n'(x)=\eta_p(x)+O(Q_p^n).$$
We can actually show a more
general
result:
\begin{theorem}\label{mainlemma}
Let $f\in C^1([0,1])$ such that $$\int_0^1f(x)dx=1.$$
Then for every positive integer $p$ and every $x\in[0,1]$,
$$(\mathscr{G}_p^nf)(x)=\eta_p(x)+O(Q_p^n).$$
\end{theorem}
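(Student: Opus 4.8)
The plan is to reduce the statement to a contraction estimate for the derivative of a conjugated density. Since $\eta_p(x)=c/(p+x)$ with $c=1/(\ln(p+1)-\ln p)$, the natural move is to set $f_n=\mathscr{G}_p^nf$ and $g_n(x)=(p+x)f_n(x)$, so that the fixed point $\eta_p$ corresponds to the constant function $g\equiv c$. A direct computation shows that in these coordinates $\mathscr{G}_p$ becomes an averaging operator,
$$g_{n+1}(x)=\sum_{k=p}^\infty a_k(x)\,g_n\Bigl(\frac{p}{k+x}\Bigr),\qquad a_k(x)=\frac{p+x}{(k+x)(k+x+1)}.$$
The key structural fact is the telescoping identity $\sum_{j\ge k}a_j(x)=\frac{p+x}{k+x}$; in particular $\sum_{k\ge p}a_k(x)\equiv1$, so $g_{n+1}$ is a genuine convex combination of values of $g_n$ and constants are fixed. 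This is the conjugated form of the invariance $\mathscr{G}_p\eta_p=\eta_p$, and it already forces the oscillation of $g_n$ to be non-increasing; the whole problem is to produce the explicit exponential rate.

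First I would differentiate the averaging relation and reorganize the two resulting sums (one from $a_k'$, one from the inner derivative of $g_n(p/(k+x))$) by summation by parts in $k$, using the partial tails $A_k(x)=\sum_{j\ge k}a_j(x)=(p+x)/(k+x)$. The crucial cancellation is that the boundary term carries the factor $A_p'(x)=\frac{d}{dx}(1)=0$, the differentiated normalization, so every occurrence of $g_n$ itself disappears and one is left with $g_{n+1}'(x)$ expressed purely as a sum of values of $g_n'$, at intermediate points supplied by the mean value theorem, weighted by strictly positive kernels. Taking absolute values gives $|g_{n+1}'(x)|\le C_p(x)\,\|g_n'\|_\infty$, where $C_p(x)$ is the sum of those positive coefficients.

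Next I would show that $C_p(x)$ is decreasing on $[0,1]$ (each summand has negative logarithmic $x$-derivative, since the growth of the numerator $p+x$ is outweighed by that of the cubic-times-linear denominator), so $\sup_x C_p(x)=C_p(0)$, and then evaluate $C_p(0)$ in closed form by partial fractions. This identifies $Q_p=C_p(0)=2p^2\zeta(3,p)-p\zeta(2,p)$ and yields the contraction $\|g_{n+1}'\|_\infty\le Q_p\,\|g_n'\|_\infty$, hence $\|g_n'\|_\infty\le Q_p^{\,n}\|g_0'\|_\infty$ with $g_0=(p+x)f$ of class $C^1$. Consequently $g_n(x)=\bar g_n+O(Q_p^n)$ for some constant $\bar g_n$, and $\bar g_n$ is pinned down by the conserved normalization $\int_0^1 f_n\,dx=\int_0^1 g_n(x)/(p+x)\,dx=1$, which forces $\bar g_n=c+O(Q_p^n)$. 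Dividing by $p+x$ converts this back to $(\mathscr{G}_p^nf)(x)=\eta_p(x)+O(Q_p^n)$.

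The main obstacle is the closed-form summation of $C_p(0)$ and the subsequent estimate $Q_p<\frac1{2p}+\frac3{8p^2}<1$: the coefficients come out as $\sum_{k>p}\frac{(k-p)p}{k^3(k-1)}+\sum_{k\ge p}\frac{p^2}{k^3(k+1)}$, and collapsing these to the clean Hurwitz-zeta expression, together with bounding $\zeta(2,p)$ and $\zeta(3,p)$ sharply enough to extract the leading term $\frac1{2p}$, is where the real work lies. A secondary point requiring care is justifying termwise differentiation and the summation by parts (uniform convergence of the differentiated series on $[0,1]$), which follows from the $O(k^{-3})$ decay of the kernels.
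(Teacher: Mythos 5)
Your skeleton coincides with the paper's: the conjugation $g_n(x)=(p+x)(\mathscr{G}_p^nf)(x)$, the averaging recursion with kernels $h_k(x)=\frac{p+x}{(k+x)(k+1+x)}$ summing to $1$, a contraction estimate for $\|g_n'\|_\infty$, and pinning the limiting constant by the conserved integral $\int_0^1 g_n(x)/(p+x)\,dx=1$ are all exactly the paper's steps. Where you genuinely depart is the elimination of the $g_n$-values after differentiating. The paper uses $\sum_k h_k'\equiv 0$ to subtract $g_n(\frac{p}{p+x})$ and applies the mean value theorem across the long intervals $[\frac{p}{k+x},\frac{p}{p+x}]$, so its coefficients retain the factors $h_k'(x)$, which change sign in $k$ and $x$; it then takes as contraction constant the \emph{signed} sum $Q(x)=p\sum_k D_k(x)$. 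Your Abel summation against the tails $A_k(x)=\frac{p+x}{k+x}$ (with $A_p'\equiv 0$ killing the boundary term) instead yields coefficients $\frac{p(k-p)}{(k-1+x)(k+x)^3}$ for $k>p$ and $\frac{p(p+x)}{(k+x)^3(k+1+x)}$ for $k\ge p$, which are manifestly nonnegative, so the triangle inequality is unconditionally valid — with the paper's decomposition one would in principle have to replace $h_k'$ by $|h_k'|$, since the mean-value points differ from term to term. Moreover your $C_p(0)$ really does collapse, by partial fractions, to exactly $2p^2\zeta(3,p)-p\zeta(2,p)=Q_p$, so this genuinely different decomposition reaches the stated constant, and at this step it is cleaner than the paper's.

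There is, however, one genuine gap: your justification that $\sup_{x\in[0,1]}C_p(x)=C_p(0)$. You claim every summand of $C_p(x)$ has negative logarithmic $x$-derivative, but for the second family this is false:
$$\frac{d}{dx}\log\frac{p(p+x)}{(k+x)^3(k+1+x)}=\frac{1}{p+x}-\frac{3}{k+x}-\frac{1}{k+1+x},$$
which is \emph{positive} once $k\ge 4(p+1)$, because then $\frac{3}{k+x}+\frac{1}{k+1+x}<\frac{4}{k+x}<\frac{1}{p+x}$ on $[0,1]$. So termwise monotonicity fails, and without $\sup_x C_p(x)=C_p(0)$ you only get a contraction with some constant possibly larger than $Q_p$. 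The conclusion itself is true: summing your coefficients in closed form gives $C_p(x)=p\bigl(2(p+x)\zeta(3,p+x)-\zeta(2,p+x)\bigr)$ (Hurwitz zeta in the shifted argument $q=p+x$), whose $q$-derivative is $p\bigl(4\zeta(3,q)-6q\zeta(4,q)\bigr)$, and the needed inequality $3q\zeta(4,q)>2\zeta(3,q)$ follows, e.g., from the integral representation of $\zeta(s,q)$ together with $1-(1+t)e^{-t}>0$. But be aware this is where the real analytic work of the theorem lives: the paper spends roughly half of its proof establishing the analogous monotonicity $D_k'(x)<0$ (the $G(k,x)<0$ computation) for \emph{its} grouping of the coefficients, and that grouping happens to be termwise monotone while yours is not — so you cannot import the paper's computation and must supply a global argument such as the one above. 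With that step repaired, the rest of your outline (justification of termwise differentiation, oscillation of $g_n$ of size $O(Q_p^n)$, and the normalization forcing the constant $c=1/(\ln(p+1)-\ln p)$) is correct and matches the paper.
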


%\section{Gauss-Kuzmin}
%\begin{theorem}
%Let $f_0$ be any twice differentiable function defined on $[0,1]$ with $f_0(0)=0$
%and $f_0(1)=1$. Let the sequence of functions $\{f_n\}_{n=0}^\infty$ be %defined
%as
%$$f_{n+1}(x)=\sum_{k=p}^\infty(f_n(\frac pk)-f_n(\frac{p}{k+x})).$$
%Let $$\Delta_n(x)=f_n(x)-\frac{\ln(p+x)-\ln p}{\ln(p+1)-\ln p}.$$
%Then
%$$\Delta_n(x)=O(Q^n)$$
%for some constant $Q=Q_p<1$.
%where $$Q=Q_p<\frac{3}{2p}-\frac{1}{p+1}\le 1.$$
%\end{theorem}

\begin{proof}
%By assumptions,
%$$f_{n+1}'(x)=\sum_{k=p}^\infty\frac{p}{(k+x)^2}f_n'(\frac{p}{k+x}).$$
Fix $p$. 
%Firstly, we note for any integrable function $\psi$,
%$$\int_0^1(\mathscr{G}_p\psi)(x)dx=\sum_{k=p}^\infty\int_0^1\psi(\frac{p}{p+x})d(\frac{p}{p+x})=\int_0^1\psi(x)dx.$$
%So for all $n$, 
%\begin{equation}
%\int_0^1(\mathscr{G}_p^nf)(x)dx=1.
%\end{equation}
 Let 
 \begin{equation}\label{subgn}
 g_n(x)=(p+x)(\mathscr{G}_p^nf)(x).
 \end{equation}
Then
%$$\frac{g_{n+1}(x)}{p+x}=(\mathscr{G}_p^n())(x).$$
\begin{equation*}
%\label{g_induction}
g_{n+1}(x)=\sum_{k=p}^\infty g_n(\frac{p}{k+x})h_k(x),
\end{equation*}
where
$$h_k(x)=\frac{p+x}{(k+x)(k+1+x)}.$$
Note that
$$\sum_{k=p}^\infty h_k(x)=(p+x)\sum_{k=p}^\infty(\frac{1}{k+x}-\frac{1}{k+1+x})=1.$$
So
$$(\sum_{k=p}^\infty h_k(x))'=\sum_{k=p}^\infty h_k'(x)=0.$$
Moreover, for every $k\ge p$ and $x\in[0,1]$,
$$|h_k'(x)|=|\frac{(k+x)(k+1+x)-(2k+1+2x)(p+x)}{(k+x)^2(k+1+x)^2}|\le\frac{3}{k(k+1)}.$$
Note that $g_n\in C^1[0,1]$. Let
$$\|g_n\|_{C^1}=\max_{x\in[0,1]}|g_n(x)|+\max_{x\in[0,1]}|g_n'(x)|$$
be the norm of $g_n$ in $C^1[0,1]$. Then for every $k\ge p$ and $x\in[0,1]$,
\begin{align*}
|(g_n(\frac{p}{k+x})h_k(x))'|=
&|g_n(\frac{p}{k+x})h_k'(x)-\frac{p}{(k+x)^2}g_n'(\frac{p}{k+x})h_k(x)|
\\ \le&\|g_n\|_{C^1}(|h_k'(x)|+\frac{p}{(k+x)^2}|h_k(x)|)
\\ \le&\|g_n\|_{C^1}(\frac{3}{k(k+1)}+\frac{p(p+x)}{(k+x)^{3}(k+1+x)})
\\ \le&4\|g_n\|_{C^1}\cdot\frac{1}{k(k+1)}.
\end{align*}
So
$$\sum_{k=p}^\infty(g_n(\frac{p}{k+x})h_k(x))'=\sum_{k=p}^\infty (g_n(\frac{p}{k+x})h_k'(x)-\frac{p}{(k+x)^2}g_n'(\frac{p}{k+x})h_k(x))$$
converges absolutely and the sequence of its partial sums converges uniformly.
Hence we have
\begin{align*}
g_{n+1}'(x)=&\sum_{k=p}^\infty (g_n(\frac{p}{k+x})h_k'(x)-\frac{p}{(k+x)^2}g_n'(\frac{p}{k+x})h_k(x))\\
=&\sum_{k=p}^\infty(g_n(\frac{p}{k+x})-g_n(\frac{p}{p+x}))h_k'(x)-\sum_{k=p}^\infty
\frac{p}{(k+x)^2}g_n'(\frac{p}{k+x})h_k(x)\\
=&-\sum_{k=p}^\infty \frac{p(k-p)}{(p+x)(k+x)}g_n'(\frac{p}{\tau_k+x})h_k'(x)-\sum_{k=p}^\infty
\frac{p}{(k+x)^2}g_n'(\frac{p}{k+x})h_k(x)\\
\end{align*}
for some $\tau_k\in[p,k], k=p,p+1,\cdots$.

Let $\|\cdot\|$ be the maximum norm on $C[0,1]$, the space of all continuous
functions on $[0,1]$, and $M_n=\|g_n'\|$, i.e.
$$M_n=\max_{x\in[0,1]}|g_n'(x)|.$$
Then $$M_{n+1}\le M_n\|Q(x)\|$$
for
\begin{align*}Q(x)=\sum_{k=p}^\infty \frac{p(k-p)}{(p+x)(k+x)}h_k'(x)+\sum_{k=p}^\infty
\frac{p}{(k+x)^2}h_k(x)=p\sum_{k=p}^\infty D_k(x)
\end{align*}
and
\begin{align*}
D_k(x)=\frac{(p+1+x)(p+x)^2+(k-p)^2(k+1-p)}{(p+x)(k+x)^{3}(k+1+x)^2}\ge0\text{
for every }k\ge p.
\end{align*}

Let
\begin{align*}
G(k,x)=&(p+x)(k+x)^{3}(k+1+x)^2D_k'(x)\\
=&(p+x)^2(p+1+x)(\frac{2}{p+x}+\frac{1}{p+1+x})-\\
&((p+x)^2(p+1+x)+(k-p)^2(k+1-p))(\frac{1}{p+x}+\frac{3}{k+x}+\frac{2}{k+1+x}).
%=&G(k,x).
\end{align*}
If $p\le k\le 2p$, then
\begin{align*}
G(k,x)\le&(p+x)^2(p+1+x)(\frac{2}{p+x}+\frac{1}{p+1+x})-\\
&(p+x)^2(p+1+x)(\frac{1}{p+x}+\frac{3}{2p+x}+\frac{2}{2p+1+x})\\
<&0.
\end{align*}
If $k\ge2p+1$, then
\begin{align*}
G(k,x)\le&(p+x)^2(p+1+x)(\frac{1}{p+x}+\frac{1}{p+1+x})-\\
&((p+x)^2(p+1+x)+(k-p)^2(k+1-p))(\frac{3}{k+x})\\
:=&G_1(k,x).
\end{align*}
But for $k\ge 2p+1$,
\begin{align*}
-\frac{\partial G_1(k,x)}{\partial k}=&
\frac{3}{(k+x)^2}((k-p)^2(k+1-p)(\frac{2(k+x)}{k-p}+\frac{k+x}{k+1-p}-1)\\
&-(p+x)^2(p+1+x))\\
>&\frac{3}{(k+x)^2}((k-p)^2(2k+3x+p-1)-(p+x)^2(p+1+x))\\
>&0.
\end{align*}
So for $k\ge 2p+1$, 
\begin{align*}
G_1(k,x)\le&G_1(2p+1,x)\\
<&(p+x)^2(p+1+x)(\frac{1}{p+x}+\frac{1}{p+1+x})-\\
&((p+x)^2(p+1+x)+(p+1)^2(p+2))(\frac{3}{p+1+x})\\
<&(p+x)^2(p+1+x)(\frac{1}{p+x}+\frac{1}{p+x+1}-\frac{6}{p+1+x})\\
<&0.
\end{align*}
Therefore for all integers $k\ge p$ and all $x\in[0,1]$,
$G(k,x)<0.$
i.e. $D_k'(x)<0.$ Hence
\begin{align*}
&Q(x)=p\sum_{k=p}^\infty D_k(x)\le p\sum_{k=p}^\infty D_k(0)\\
=&\sum_{k=p}^\infty\frac{(p+1)p^2+(k-p)^2(k+1-p)}{k^{3}(k+1)^2}\\
=&\sum_{k=p}^\infty\frac{(k-p)^2(k+1)+p^2(2k+1)-pk^2}{k^3(k+1)^2}
\\
%=&\sum_{k=p}^\infty\frac{(k-p)^2}{k^{3}(k+1)}+\sum_{k=p}^\infty\frac{p^2(2k+1)}{k^3(k+1)^2}-\sum_{k=p}^\infty\frac{p}{k(k+1)^2}\\
=&\sum_{k=p}^\infty\frac{1}{k(k+1)}-\sum_{k=p}^\infty\frac{2p}{k^{2}(k+1)}+\sum_{k=p}^\infty\frac{p^2}{k^{3}(k+1)}
+\sum_{k=p}^\infty\frac{p^2(2k+1)}{k^3(k+1)^2}-\sum_{k=p}^\infty\frac{p}{k(k+1)^2}\\
=&\sum_{k=p}^\infty(\frac1k-\frac1{k+1})-2p\sum_{k=p}^\infty(\frac{1}{k^2}-\frac{1}{k(k+1)})+p^2\sum_{k=p}^\infty(\frac{1}{k^{3}}-\frac{1}{k^2(k+1)})\\
&+p^2\sum_{k=p}^\infty(\frac{1}{k^3}-\frac{1}{k(k+1)^2})-p\sum_{k=p}^\infty(\frac{1}{k(k+1)}-\frac{1}{(k+1)^2})\\
=&\frac1p-2p(\zeta(2,p)-\frac1p)+p^2(\zeta(3,p)-\sum_{k=p}^\infty(\frac{1}{k^2}-\frac{1}{k(k+1)}))\\
&+p^2(\zeta(3,p)-\sum_{k=p}^\infty(\frac{1}{k(k+1)}-\frac{1}{(k+1)^2}))-p(\frac1p-(\zeta(2,p)+\frac{1}{p^2}))\\
%=&2p^{2}(\zeta(3)-H_{(p-1),3})-p(\zeta(2)-H_{(p-1),2})
=&-p\zeta(2,p)+1+2p^2\zeta(3,p)-p^2(\zeta(2,p)-\frac1p)-p^2(\frac1p-(\zeta(2,p)-\frac1{p^2}))\\
=&2p^2\zeta(3,p)-p\zeta(2,p)=Q_p.
\end{align*}
%where
%$$\zeta(2,p)=\sum_{k=p}^\infty\frac{1}{k^2}\text{ and }\zeta(3,p)=\sum_{k=p}^\infty\frac{1}{k^3}$$
%are the Hurwitz zeta functions.

We will show in Theorem \ref{qpest} that $Q_p<1$. So $M_n=O(Q_p^n)$,
i.e. $g'_n(x)=O(Q_p^n)$. Then there is a  constant $c\in\mathbb{R}$ such that
%$$g_n(x)=O(Q_p^n)+c.$$
\begin{equation}\label{gpconst}
(\mathscr{G}_p^nf)(x)=\frac{g_n(x)}{p+x}=\frac{c}{p+x}+O(Q_p^n).
\end{equation}
We note for any integrable function $\psi$,
$$\int_0^1(\mathscr{G}_p\psi)(x)dx=-\sum_{k=p}^\infty\int_0^1\psi(\frac{p}{k+x})d(\frac{p}{k+x})=\int_0^1\psi(x)dx.$$
So for all $n$, 
\begin{equation*}
\int_0^1(\mathscr{G}_p^nf)(x)dx=1.
\end{equation*}
Hence in (\ref{gpconst}) we must have 
$$c=\frac{1}{\ln(p+1)-\ln p}.$$
%$$\Delta_n(x)=f_n(x)-\frac{\ln(p+x)-\ln p}{\ln(p+1)-\ln p}=O(Q_p^n).$$
\end{proof}

\begin{remark}
A direct corollary of Theorem \ref{mainlemma} is
\begin{equation}\label{Glimit}
\lim_{n\to\infty}(\mathscr{G}_p^nf)(x)=\eta_p(x).
\end{equation}
However, the idea of the proof actually relies on the knowledge that $\eta_p(x)$
is invariant of $\mathscr{G}_p$: We presume that (\ref{Glimit}) holds. So we make the substitution (\ref{subgn}) and consider the derivative of $g_n$.
\end{remark}

Now we evaluate $Q_p$. If $p=1$ then $Q_1=2\zeta(3)-\zeta(2)<0.76$, where $\zeta(n)$ is the Riemann
zeta function. For $p\ge 2$ the following estimate is not too bad. 

\begin{theorem}\label{qpest}
For every positive integer $p$,
$$\frac1p-\frac1{2p+1}<Q_p<\frac1{2p}+\frac{3}{8p^2}<1.$$
%This implies that
%$$Q_p=\frac{1}{2p}+O(\frac{1}{p^2}).$$
\end{theorem}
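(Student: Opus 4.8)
The plan is to turn the two–sided estimate for $Q_p=2p^2\zeta(3,p)-p\zeta(2,p)$ into sharp asymptotic expansions of the two Hurwitz zeta values and then combine them so that the $O(1)$ parts cancel. The natural tool is the Euler–Maclaurin summation formula applied to $f_s(x)=x^{-s}$, whose tail integral $\int_p^\infty x^{-s}\,dx=\frac{1}{(s-1)p^{s-1}}$ and all derivatives are explicit. The structural reason a cancellation is available is that the combined series $Q_p=\sum_{k=p}^\infty\bigl(\frac{2p^2}{k^3}-\frac{p}{k^2}\bigr)$ has vanishing associated integral $\int_p^\infty\bigl(\frac{2p^2}{x^3}-\frac{p}{x^2}\bigr)\,dx=0$, so $Q_p$ is \emph{purely} an Euler–Maclaurin discrepancy, whose leading correction $\tfrac12 f(p)=\tfrac1{2p}$ produces the main term.

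Concretely, for $s\in\{2,3\}$ I would record the Euler–Maclaurin expansion through the $B_2$-term,
$$\zeta(s,p)=\frac{1}{(s-1)p^{s-1}}+\frac{1}{2p^s}+\frac{s}{12p^{s+1}}+R_s,$$
and control $R_s$ using that $x^{-s}$ is completely monotone: the expansion is then enveloping, so $R_s$ has the sign of and magnitude at most that of the first omitted ($B_4$) term $-\frac{s(s+1)(s+2)}{720\,p^{s+3}}$. This gives $R_2\in[-\tfrac1{30p^5},0]$ and $R_3\in[-\tfrac1{12p^6},0]$. The weaker one-sided bound $\zeta(s,p)\ge\frac{1}{(s-1)p^{s-1}}+\frac{1}{2p^s}$ also follows elementarily from convexity of $x^{-s}$ via the trapezoidal inequality, a useful independent check.

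Substituting and using that the explicit parts recombine exactly, I would obtain $Q_p=\frac{1}{2p}+\frac{1}{3p^2}+E$ with $E=2p^2R_3-pR_2$, whence $E\in[-\tfrac1{6p^4},\tfrac1{30p^4}]$. The upper bound then reduces to $\tfrac13+\tfrac1{30p^2}<\tfrac38$ for all $p\ge1$ (true since $\tfrac1{30}<\tfrac1{24}=\tfrac38-\tfrac13$), giving $Q_p<\frac1{2p}+\frac{3}{8p^2}$; one checks separately that $\frac1{2p}+\frac{3}{8p^2}\le\frac78<1$. For the lower bound, $Q_p>\frac1{2p}+\frac1{3p^2}-\frac1{6p^4}$, and comparing this with $\frac1p-\frac1{2p+1}=\frac{p+1}{p(2p+1)}$ reduces, after clearing denominators, to the inequality $(p-1)(p^2+3p+1)\ge0$, which holds for every $p\ge1$.

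I expect the main obstacle to be twofold. First, pinning down the correct signs and constants in the Euler–Maclaurin remainder and justifying the enveloping property for $x^{-s}$; if a self-contained route is preferred, one integrates the Bernoulli-polynomial remainder on each interval $[k,k+1]$ and sums, exploiting the constant sign of $f_s^{(4)}$ to recover the same one-sided bounds. Second, and more delicate, is strictness of the lower bound at $p=1$: there the polynomial comparison degenerates to equality, so the strict inequality $Q_1>\tfrac23$ must be extracted from the remainder estimate being strict, i.e.\ from the fact that the first omitted Euler–Maclaurin terms are nonzero, forcing $E>-\tfrac1{6p^4}$ with strict inequality for all $p\ge1$.
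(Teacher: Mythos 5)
Your proposal is correct, and it takes a genuinely different route from the paper. The paper never touches Euler--Maclaurin: it brackets the two zeta values by purely telescoping series, choosing the shifts $a=\frac12(\sqrt{4p^2+1}-(2p+1))$ and $b=p(\sqrt{p^2+1}-p)$ as roots of quadratics precisely so that $k^2\le(k+a)(k+1+a)$ and $k^4\ge(k^2-k+b)(k^2+k+b)$ hold termwise; summing the telescopes gives closed-form surd bounds such as $\zeta(2,p)\ge\frac{2}{\sqrt{4p^2+1}-1}$ and $\zeta(3,p)<\frac{1}{2p(\sqrt{p^2+1}-1)}$ (with half-integer shifts for the opposite directions), after which the theorem is elementary algebra. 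Your Euler--Maclaurin argument buys more than the theorem asks: the two-sided expansion $Q_p=\frac1{2p}+\frac1{3p^2}+E$ with $-\frac{1}{6p^4}<E<\frac{1}{30p^4}$ is sharper than the stated bounds, recovers the coefficient $2B_2=\frac13$ of the asymptotic series mentioned in the paper's remark, and your nice structural observation that $\int_p^\infty(\frac{2p^2}{x^3}-\frac{p}{x^2})\,dx=0$ explains \emph{why} $Q_p$ is small --- it is pure Euler--Maclaurin discrepancy. All of your computations check out: the remainder windows $R_2\in(-\frac1{30p^5},0)$, $R_3\in(-\frac1{12p^6},0)$, the reduction of the upper bound to $\frac13+\frac1{30p^2}<\frac38$, and the reduction of the lower bound to $(p-1)(p^2+3p+1)\ge0$. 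The price is that the enveloping property (remainder bracketed by, and of the same sign as, the first omitted term, \emph{strictly}) must be cited or proved via the Bernoulli-polynomial remainder; and you correctly isolate the one delicate point, $p=1$, where your polynomial comparison degenerates to equality and the strict inequality must be pulled out of strictness of $R_2<0$ (or of $R_3>-\frac1{12p^6}$) --- by contrast, the paper's telescoping chain retains explicit slack at $p=1$, since its intermediate bound $\frac1p+\frac1{2p+1}-\frac{p+1/2}{p^2+p+1/2}$ exceeds $\frac1p-\frac1{2p+1}$ with room to spare. In short: the paper's proof is fully self-contained and elementary at the cost of ad hoc parameter choices; yours is systematic, sharper, and generalizes, at the cost of one classical analytic ingredient that needs justification.
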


\begin{remark}
This implies that
$$Q_p=\frac{1}{2p}+O(\frac{1}{p^2}).$$
%A more precise estimate is 
%$$Q_p=\frac{1}{2p}+\frac{1}{3p^2}-\frac{2}{15p^4}+\frac{1}{7p^6}+O(\frac{1}{p^8}),$$
%which can be obtained by 
Applying results on asymptotic expansions
of Hurwitz zeta functions or polygamma functions (cf. \cite{AbrS}) we actually
have
$$Q_p\sim\sum_{k=1}^\infty kB_k(\frac{1}{p})^k,$$
where $B_1=\frac12$, $B_2=\frac16$, $B_3=0$, $\cdots$ are the Bernuolli numbers of the second
kind.
\end{remark}

\begin{proof}
Fix $p$. For 
$$k\ge p, a=\frac12(\sqrt{4p^2+1}-(2p+1))>-\frac12,$$
we have $$a^2+(2p+1)a+p=0$$
and $$a^2+(2p+1)a+p+(2a+1)(k-p)\ge 0,$$
i.e.
$$k^2\le(k+a)(k+1+a).$$
Hence
\begin{align*}
\zeta(2,p)\ge&\sum_{k=p}^{\infty}(\frac{1}{k+a}-\frac{1}{k+1+a})
=\frac{1}{p+a}.
\end{align*}
For
$$k\ge p, b=p(\sqrt{p^2+1}-p)<\frac12,$$
we have
$$b^2+2p^2b-p^2=0$$
and
$$b^2+2p^2b-p^2+(2b-1)(k^2-p^2)\le 0,$$
i.e.
$$k^4\ge(k^2-k+b)(k^2+k+b).$$
Hence
\begin{align*}
\zeta(3,p)<&\frac12\sum_{k=p}^\infty(\frac{1}{(k-1)k+b}-\frac{1}{k(k+1)+b})=\frac{1}{2(p^2-p+b)}.
\end{align*}
Therefore,
%for $p\ge 2$,
%$$\sigma_2(p)>\sum_{k=p}^\infty\frac{1}{k(k+1)}=\frac1p$$
%and
%$$\sigma_3(p)<\frac{1}{p^3}+\sum_{k=p+1}^\infty(\frac{1}{(k-1)k}-\frac{1}{k(k+1)})=\f$$
\begin{align*}
Q_p=&2p^2\zeta(3,p)-p\zeta(2,p)<\frac{2p^2}{2p(\sqrt{p^2+1}-1)})-\frac{2p}{\sqrt{4p^2+1}-1}\\
%<&\frac{2}{p}+\frac{2p^2}{2p(p+1)}-\frac{1}{2p}-1
=&\frac{\sqrt{p^2+1}+1}{p}-\frac{\sqrt{4p^2+1}+1}{2p}=\frac1{2p}+\frac{2\sqrt{p^2+1}-\sqrt{4p^2+1}}{2p}\\
=&\frac1{2p}+\frac{3}{2p(2\sqrt{p^2+1}+\sqrt{4p^2+1})}<\frac1{2p}+\frac{3}{8p^2}.
\end{align*}
Meanwhile,
\begin{align*}
\zeta(2,p)<&\frac1{p^2}+\sum_{k=p+1}^\infty\frac{1}{(k-\frac12)(k+\frac12)}
=\frac{1}{p^2}+\frac{2}{2p+1};\\
%>&\frac12(\frac{1}{p^2}+2\sum_{k=p}^{\infty}\frac{1}{k(k+1)})\\\
%=&\frac{1}{2p^2}+\frac{1}{p};
%\end{align*}
%\begin{align*}
\zeta(3,p)>&
%\frac{1}{p^3}+\sum_{k=p+1}^\infty \frac{k}{k^4+\frac14}
%\frac{1}{2}(\zeta(3,p)+\zeta(3,p))
%=
\frac{1}{p^3}+\frac12\sum_{k=p+1}^{\infty}(\frac{1}{k^2-k+\frac12}+\frac{1}{k^2+k+\frac12})
=\frac{1}{p^3}+\frac{1}{2(p^2+p+\frac12)}.
%=\frac12(\frac{1}{p^3}+\frac{1}{p^2}).
\end{align*}
Hence
\begin{align*}
Q_p=&2p^2\zeta(3,p)-p\zeta(2,p)\\
>&
2p^2(\frac{1}{p^3}+\frac{1}{2(p^2+p+\frac12)})-p(\frac{1}{p^2}+\frac{2}{2p+1})\\
%=&\frac{1}{p}+1-\frac{1}{p}-\frac{1}{2}-\frac{p}{2(p+1)}
=&\frac1p+\frac{1}{2p+1}-\frac{p+\frac12}{p^2+p+\frac12}>\frac1p-\frac1{2p+1}
%>\frac{1}{2p}
.
\end{align*}
\end{proof}

%\section*{Acknowledgments}
%The author is supported by NSFC No. 11571387. 
%The author would like to thank
%Omri Sarig who first asked the author to think about how the Gauss measure
%can be found. 
 
%+Bibliography

%-Bibliography

\end{document}